\newcommand{\field}[1]{\mathbb{#1}}
\newcommand{\N}{\field{N}}
\newcommand{\C}{\field{C}}
\numberwithin{equation}{section}
\newtheorem{theorem}{Theorem}[section]
\newtheorem{lemma}[theorem]{Lemma}
\theoremstyle{remark}
\renewenvironment{proof}[1][Proof]{\begin{trivlist}
\item[\hskip \labelsep {\bfseries #1:}]}{\qed\end{trivlist}}
\title{A combinatorial proof and refinement of a partition identity of Siladi\'c}
\author{Jehanne Dousse}
\date{\today}
\begin{document}

\maketitle

%\tableofcontents
%\newpage
%
%

\begin{abstract}
In this paper we give a combinatorial proof and refinement of a Rogers-Ramanujan type partition identity of Siladi\'c~\cite{Siladic} arising from the study of Lie algebras. Our proof uses generating functions and $q$-difference equations.
\end{abstract}

\section{Introduction}
A partition of $n$ is a non-increasing sequence of natural numbers whose sum is $n$.
For example, there are $5$ partitions of $4$: $4$, $3+1$, $2+2$, $2+1+1$ and $1+1+1+1$.
The Rogers-Ramanujan identities~\cite{RogersRamanujan}, first discovered by Rogers in 1894 and rediscovered by Ramanujan in 1917 are the following $q$-series identities:
\begin{theorem}
\label{rr}
Let $a=0$ or $1$. Then
\begin{equation*}
\sum_{k=0}^n \frac{q^{n(n+a)}}{(1-q)(1-q^2)...(1-q^n)} = \prod_{k=0}^{\infty} \frac{1}{(1-q^{5k+a+1})(1-q^{5k+4-a})}.
\end{equation*}
\end{theorem}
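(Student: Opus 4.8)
The plan is to prove both cases ($a=0$ and $a=1$) simultaneously by the classical method of $q$-difference equations. Write $(q;q)_n=(1-q)(1-q^2)\cdots(1-q^n)$ and homogenise the left-hand side by inserting a variable $x$,
\[
F_a(x):=\sum_{n=0}^{\infty}\frac{x^{n}q^{n^{2}+an}}{(q;q)_{n}},
\]
so that the sum to be evaluated is $F_a(1)$. Subtracting $F_a(xq)$ from $F_a(x)$ cancels the $n=0$ terms and leaves a sum in which the factor $1-q^{n}$ reduces $(q;q)_n$ to $(q;q)_{n-1}$; re-indexing $n\mapsto n+1$ then yields
\[
F_a(x)=F_a(xq)+xq^{a+1}F_a(xq^{2}).
\]
Writing $F_a(x)=\sum_{n\ge 0}c_nx^{n}$ with $c_0=1$, comparison of coefficients turns this into the recurrence $c_n(1-q^{n})=q^{a+2n-1}c_{n-1}$, which determines every $c_n$; hence $F_a$ is the \emph{unique} power-series solution with $F_a(0)=1$.

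The second step is to exhibit a solution of the same equation coming from the product side. By the Jacobi triple product identity the product $\prod_{k\ge 0}\bigl((1-q^{5k+a+1})(1-q^{5k+4-a})\bigr)^{-1}$ equals $\Theta_a/(q;q)_\infty$, where $\Theta_a=\sum_{j\in\Z}(-1)^{j}q^{j(5j-2a-1)/2}$. One is thus led to look for a closed form $F_a(x)=\theta_a(x)/(xq;q)_\infty$ in which $\theta_a(x)$ is an explicit $q$-series with $\theta_a(0)=1$. Substituting the factorisations $(xq;q)_\infty=(1-xq)(xq^{2};q)_\infty=(1-xq)(1-xq^{2})(xq^{3};q)_\infty$ into the functional equation, the latter becomes
\[
\theta_a(x)=(1-xq)\,\theta_a(xq)+xq^{a+1}(1-xq)(1-xq^{2})\,\theta_a(xq^{2}),
\]
and the work is to produce the right $\theta_a$ — a one-sided $q$-series whose coefficients are signed powers of $q$ with quadratic exponents, the lowest term equal to $1$, together with a small $x$-dependent correction — and to verify this identity by shifting the summation index and rearranging.

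Granting this, the uniqueness from the first paragraph forces $F_a(x)=\theta_a(x)/(xq;q)_\infty$, and it remains to set $x=1$: there $\theta_a(1)$ collapses to $\Theta_a$ (the Jacobi triple product entering a second time), whence
\[
F_a(1)=\frac{\Theta_a}{(q;q)_\infty}=\prod_{k\ge 0}\frac{1}{(1-q^{5k+a+1})(1-q^{5k+4-a})},
\]
which finishes the proof. I expect the genuine obstacle to be the middle step: correctly guessing the $x$-deformation $\theta_a$ of the theta series and checking that it satisfies the companion $q$-difference equation. Pinning down the small $x$-dependent correction so that both $\theta_a(0)=1$ and the functional equation hold, and carrying out the index shift without sign errors, is where the real work lies; the purely combinatorial alternative — Schur's argument via partitions into parts that differ by at least $2$ — replaces these theta manipulations by a delicate sign-reversing involution, so the difficulty is displaced rather than removed.
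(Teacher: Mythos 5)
A preliminary but important observation: the paper does not prove Theorem~\ref{rr} at all. It is quoted in the introduction as the classical Rogers--Ramanujan identity, with a citation to \cite{RogersRamanujan}, and serves only as motivation for the Siladi\'c-type results that the paper actually proves. So there is no in-paper proof to measure your argument against; your proposal has to stand on its own.

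As a free-standing argument it has a genuine gap, and it sits exactly where you say the ``real work lies.'' Your first step is correct and routine: telescoping $F_a(x)-F_a(xq)$ and re-indexing gives $F_a(x)=F_a(xq)+xq^{a+1}F_a(xq^2)$, and the coefficient recurrence $c_n(1-q^n)=q^{a+2n-1}c_{n-1}$ with $c_0=1$ does characterize $F_a$ uniquely among formal power series in $x$. Your third step is also standard: the Jacobi triple product converts $\Theta_a/(q;q)_\infty$ with $\Theta_a=\sum_{j\in\Z}(-1)^jq^{j(5j-2a-1)/2}$ into the stated product, and your exponents are the right ones. But the second step is not carried out: you never exhibit $\theta_a(x)$, never verify that it satisfies $\theta_a(x)=(1-xq)\theta_a(xq)+xq^{a+1}(1-xq)(1-xq^2)\theta_a(xq^2)$ with $\theta_a(0)=1$, and never check $\theta_a(1)=\Theta_a$. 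That construction and verification is the entire content of the Rogers--Ramanujan identities; everything else in your outline is bookkeeping that would ``prove'' any number of false product evaluations if one were allowed to assume a suitable $\theta_a$ exists. Moreover, the object you are gesturing at (Rogers' auxiliary series, as in the Rogers--Ramanujan 1919 proof or Schur's polynomial version) is not a theta series with ``a small $x$-dependent correction'': its terms carry genuine $x$-dependent $q$-Pochhammer or Gaussian-binomial factors, and guessing that structure is precisely the hard part. Until $\theta_a$ is written down and the companion functional equation is verified by the promised index shift, the proof is incomplete.
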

These analytic identities can be interpreted in terms of partitions in the following way:
\begin{theorem}
Let $a=0$ or $1$. Then for every natural number $n$, the number of partitions of $n$ such that the difference between two consecutive parts is at least $2$ and the part $1$ appears at most $1-a$ times is equal to the number of partitions of $n$ into parts congruent to $\pm (1+a) \mod 5.$
\end{theorem}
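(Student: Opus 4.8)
The plan is to prove the two families of partitions are equinumerous by showing that each of them is enumerated, graded by the integer being partitioned, by one side of the analytic identity in Theorem~\ref{rr}, and then invoking that identity to equate coefficients of $q^n$. This is the classical Schur--MacMahon style of argument.

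First I would treat the product side. For a fixed positive integer $m$ the factor $\frac{1}{1-q^m}=\sum_{j\ge 0}q^{jm}$ is the generating function recording, via the exponent, the total contribution of the parts equal to $m$ (the summation variable $j$ being the multiplicity of $m$). Taking the product over all $m\equiv\pm(1+a)\pmod 5$, that is over $m\in\{5k+a+1:k\ge 0\}\cup\{5k+4-a:k\ge 0\}$, shows that
\[
\prod_{k=0}^{\infty}\frac{1}{(1-q^{5k+a+1})(1-q^{5k+4-a})}=\sum_{n\ge 0}p_a(n)\,q^n,
\]
where $p_a(n)$ denotes the number of partitions of $n$ into parts congruent to $\pm(1+a)\bmod 5$. (One should check that the two arithmetic progressions are disjoint, so that no part-size is counted twice: for $a=0$ they are $5k+1$ and $5k+4$, and for $a=1$ they are $5k+2$ and $5k+3$.)

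Next I would treat the sum side. Fix $n\ge 0$ and consider partitions $\lambda_1\ge\lambda_2\ge\cdots\ge\lambda_n\ge 1$ with $\lambda_i-\lambda_{i+1}\ge 2$ for $1\le i\le n-1$ and, in addition, $\lambda_n\ge 1+a$; for $a=1$ this last condition says that the part $1$ does not occur, while for $a=0$ it is automatic, so in both cases it is equivalent to ``the part $1$ appears at most $1-a$ times'', the difference-$2$ condition already forbidding two $1$'s. The key step is the ``subtract the minimal staircase'' bijection: the componentwise difference $\mu_i=\lambda_i-(1+a)-2(n-i)$ defines a partition $\mu_1\ge\mu_2\ge\cdots\ge\mu_n\ge 0$ with at most $n$ parts, and $\lambda\mapsto\mu$ is a bijection onto the set of all such $\mu$. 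Since $\sum_{i=1}^{n}\bigl((1+a)+2(n-i)\bigr)=n(1+a)+2\binom{n}{2}=n(n+a)$, and the generating function for partitions into at most $n$ parts is $\frac{1}{(1-q)(1-q^2)\cdots(1-q^n)}$, the generating function for the valid partitions with exactly $n$ parts is $\frac{q^{n(n+a)}}{(1-q)(1-q^2)\cdots(1-q^n)}$. Summing over $n\ge 0$, with the term $n=0$ accounting for the empty partition of $0$, gives $\sum_{n\ge 0}r_a(n)\,q^n$, where $r_a(n)$ is the number of partitions of $n$ of the first type described in the theorem.

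Finally, Theorem~\ref{rr} asserts that these two generating functions coincide, so comparing coefficients of $q^n$ yields $r_a(n)=p_a(n)$ for every $n$, which is exactly the claim. I expect the only points genuinely requiring care to be the bookkeeping around the part $1$ — verifying that the single inequality $\lambda_n\ge 1+a$ is, in the presence of the difference-$2$ condition, equivalent to the stated restriction on the multiplicity of $1$ — and checking that the staircase subtraction is a genuine bijection (both that $\mu$ is a partition with at most $n$ parts and that every such $\mu$ arises), including the degenerate cases $n=0$ and $n=1$.
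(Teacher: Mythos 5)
Your proof is correct: the staircase subtraction $\mu_i=\lambda_i-(1+a)-2(n-i)$ is the standard bijection, the bookkeeping of the exponent $n(n+a)$ and of the multiplicity of the part $1$ both check out, and equating coefficients in Theorem~\ref{rr} finishes the argument. The paper itself offers no proof of this statement --- it is quoted in the introduction as the classical combinatorial interpretation of the Rogers--Ramanujan identities --- and your argument is exactly the standard derivation it implicitly relies on.
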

Rogers-Ramanujan type partition identities establish equalities between certain types of partitions with difference conditions and partitions whose generating functions is an infinite product.

Since the 1980's, many connections between representations of Lie algebras and Rogers-Ramanujan type partition identities have emerged. Lepowsky and Wilson~\cite{Lepowsky} were the first to establish this link by giving an interpretation of Theorem~\ref{rr} in terms of representations of the affine Lie algebra $\mathfrak{sl}(2,\C)\sptilde.$ Similar methods were subsequently applied to other representations of affine Lie algebras, yielding new partition identities of the Rogers-Ramanujan type, as those discovered by Capparelli~\cite{Capparelli}, Primc~\cite{Primc} and Meurman-Primc~\cite{Meurman}. Capparelli's conjecture was proved combinatorially by Andrews in~\cite{Alladi} and~\cite{Andrews1} just before Capparelli finished proving them with Lie-algebraic techniques.
However, most of the Rogers-Ramanujan type partition identities arising from the study of Lie algebras have yet to be understood combinatorially.

In \cite{Siladic}, Siladi\'c proved the following theorem by studying representations of the twisted affine Lie algebra $A^{(2)}_2.$

\begin{theorem}
\label{siladic}
The number of partitions $\lambda_1 + ...+ \lambda_s$ of an integer $n$ into parts different from $2$ such that difference between two consecutive parts is at least $5$ (ie. $\lambda_i - \lambda_{i+1} \geq 5$) and
\begin{equation*}
\begin{aligned}
&\lambda_i - \lambda_{i+1} = 5 \Rightarrow \lambda_i + \lambda_{i+1} \not \equiv \pm 1, \pm 5, \pm 7 \mod 16,
\\&\lambda_i - \lambda_{i+1} = 6 \Rightarrow \lambda_i + \lambda_{i+1} \not \equiv \pm 2, \pm 6 \mod 16,
\\&\lambda_i - \lambda_{i+1} = 7 \Rightarrow \lambda_i + \lambda_{i+1} \not \equiv \pm 3 \mod 16,
\\&\lambda_i - \lambda_{i+1} = 8 \Rightarrow \lambda_i + \lambda_{i+1} \not \equiv \pm 4 \mod 16,
\end{aligned}
\end{equation*}
is equal to the number of partitions of $n$ into distinct odd parts.
\end{theorem}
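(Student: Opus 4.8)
The plan is to follow the now-classical strategy, going back to Andrews, of encoding the partitions that satisfy the difference conditions in a generating function, deriving a $q$-difference equation satisfied by that generating function, and then recognising the infinite product $\prod_{k\geq 0}(1+q^{2k+1})$ — or rather a one-parameter refinement of it — as the unique power series solving that equation. Before setting anything up, I would rewrite Siladić's congruence conditions in a more tractable form. Since $\lambda_i+\lambda_{i+1}=2\lambda_{i+1}+(\lambda_i-\lambda_{i+1})$, each condition on $\lambda_i+\lambda_{i+1}\bmod 16$ becomes a condition on $\lambda_{i+1}\bmod 8$ (equivalently on $\lambda_i\bmod 8$): for example the first one says that if $\lambda_i-\lambda_{i+1}=5$ then $\lambda_{i+1}\equiv 4$ or $7\pmod 8$; one checks that the difference-$6$ condition forces both $\lambda_i$ and $\lambda_{i+1}$ to be odd, while the difference-$7$ and difference-$8$ conditions each forbid exactly two residue classes mod $8$ for $\lambda_{i+1}$. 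This recasts Siladić's partitions as partitions into parts $\neq 2$ with $\lambda_i-\lambda_{i+1}\geq 5$ for all $i$, subject to an explicit restriction on $(\lambda_i,\lambda_{i+1})\bmod 8$ whenever $5\leq\lambda_i-\lambda_{i+1}\leq 8$.

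I would then introduce the refinement by adjoining a variable $x$ recording a suitable statistic of the partition (the number of parts, possibly with weights depending on residues — picking the right statistic so that a closed functional equation exists is itself part of the work), and form $f(x)=f(x,q)=\sum_{\lambda} x^{\,s(\lambda)}q^{|\lambda|}$, the sum over the reformulated Siladić partitions. Because the smallest admissible value of $\lambda_1-\lambda_2$ and the residue restriction on that pair depend on $\lambda_2\bmod 8$, no single equation will close; instead I would decompose $f=\sum_r f_r$ according to the residue of the largest part mod $8$ (keeping track, where needed, of the exact largest part inside the window $\{\lambda_2+5,\dots,\lambda_2+8\}$), and derive a system of $q$-difference equations by deleting the largest part: removing $\lambda_1$ from an admissible partition leaves an admissible partition whose largest part $\lambda_2$ satisfies $\lambda_2\leq\lambda_1-5$ together with the residue constraints, so each $f_r(x)$ is expressed in terms of the $f_s(xq^{j})$ for a few small shifts $j$, with explicit monomial prefactors coming from the deleted part.

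The core of the proof is then to solve this linear system. I would eliminate the auxiliary series $f_r$ to obtain a single $q$-difference equation for $f(x)$ and simplify it, by standard $q$-series manipulations, into the equation $P(x)=(1+xq)\,P(xq^{2})$ — or the analogous equation for whichever refined product turns out to be correct — which, together with the normalisation $f(0,q)=1=P(0,q)$, has $P(x)=\prod_{k\geq 0}(1+xq^{2k+1})$ as its unique solution; specialising $x$ then yields Theorem~\ref{siladic} and its refinement. I expect the main obstacle to be exactly this elimination: the system coming from the previous step couples several series shifted by different powers of $q$, and both compressing it (by choosing the statistic $s$ so that the system is as small and symmetric as possible) and decoupling it so that the product emerges are delicate. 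A secondary, purely technical difficulty is handling the boundary cases — the excluded part $2$, and partitions with only one or two parts — consistently across all the equations.
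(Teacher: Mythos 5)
Your setup is the right one and matches the paper's: the congruence conditions on $\lambda_i+\lambda_{i+1}\bmod 16$ do reduce, via $2\lambda_i=(\lambda_i+\lambda_{i+1})+(\lambda_i-\lambda_{i+1})$, to residue conditions mod $8$ on a single part (your checks of which classes survive are correct), and the proof does proceed by a refined generating function, a system of $q$-difference equations obtained by deleting the largest part, and the functional equation $f(x)=(1+xq)f(xq^2)$ with $f(0)=1$ forcing the product $\prod_{k\ge 0}(1+xq^{2k+1})$. But the decisive step of your plan --- ``eliminate the auxiliary series to obtain a single $q$-difference equation for $f(x)$'' --- is exactly the step you defer as ``the main obstacle,'' and it is also the step the paper explicitly reports it could not carry out: with eight coupled equations (one for each residue of the largest part mod $8$), no usable single recurrence for $G_{8N}$ was found. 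So as written the proposal has a genuine gap precisely where the proof lives; everything before that point is routine bookkeeping, and nothing in your outline explains how the system actually gets decoupled.

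Two further points of comparison. First, the refinement statistic is not ``the number of parts'' but the number of odd parts plus \emph{twice} the number of even parts; without this choice the monomial prefactors produced by deleting a largest part of either parity do not line up to give $(1+tq)$, so ``picking the right statistic'' is not an optional refinement but a prerequisite for the functional equation to close. Second, the paper's way around the elimination problem is to work with the truncated series $G_N(t,q)$ (largest part $\le N$) and to prove the \emph{already-factored} identity $G_{2m}(t,q)=(1+tq)G_{2m-3}(tq^2,q)$ directly by strong induction on $m$, split into four cases according to $m\bmod 4$; each case requires deriving additional identities for the counts $e_N(k,n)$ of partitions with largest part exactly $N$ (including cancellations such as $e_{8N-2}(k-1,n-(8N+3))=e_{8N-3}(k-2,n-(8N+4))$) that do not appear in the basic system. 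If you want to complete your argument you will either need to find the elimination that the paper could not, or adopt this inductive scheme on the truncation parameter.
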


This paper is devoted to proving combinatorially and refining Theorem~\ref{siladic}. In Section~\ref{reformulation} we give an equivalent formulation of Theorem~\ref{siladic} which is easier to manipulate in terms of partitions. In Section~\ref{qdiff} we establish $q$-difference equations satisfied by the generating functions of partitions considered in Theorem~\ref{siladic}. Finally, we use those $q$-difference equations to prove Theorem~\ref{siladic} by induction.

Our refinement of Theorem~\ref{siladic} is the following:

\begin{theorem}
\label{refinement}
For $n \in \N$ and $k \in \N^*$, let $A(k,n)$ denote the number of partitions $\lambda_1 + ...+ \lambda_s$ of $n$ such that $k$ equals the number of odd part plus twice the number of even parts, satisfying the following conditions:
\begin{enumerate}
  \item $\forall i \geq 1, \lambda_i \neq 2$,
  \item $\forall i \geq 1, \lambda_i - \lambda_{i+1} \geq 5$,
  \item $\forall i \geq 1$,
  \begin{equation*}
  \begin{aligned}
&\lambda_i - \lambda_{i+1} = 5 \Rightarrow \lambda_i \equiv 1, 4 \mod 8,
\\&\lambda_i - \lambda_{i+1} = 6 \Rightarrow \lambda_i \equiv 1, 3, 5, 7 \mod 8,
\\&\lambda_i - \lambda_{i+1} = 7 \Rightarrow \lambda_i \equiv 0, 1, 3, 4, 6, 7 \mod 8,
\\&\lambda_i - \lambda_{i+1} = 8 \Rightarrow \lambda_i \equiv 0, 1, 3, 4, 5, 7 \mod 8.
	\end{aligned}
	\end{equation*}
\end{enumerate}

For $n \in \N$ and $k \in \N^*$, let $B(k,n)$ denote the number of partitions of $n$ into $k$ distinct odd parts.
Then for all $n \in \N$ and $k \in \N^*$, $A(k,n)=B(k,n)$.
\end{theorem}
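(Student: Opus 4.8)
The plan is to reduce Theorem~\ref{refinement} to the generating-function identity
\[
\sum_{n \ge 0}\sum_{k \ge 0} A(k,n)\, x^{k} q^{n} \;=\; \prod_{j \ge 0}\bigl(1 + x q^{2j+1}\bigr),
\]
valid as formal power series, since the right-hand side is manifestly $\sum_{k,n} B(k,n)\, x^{k} q^{n}$: expanding the product, the coefficient of $x^{k} q^{n}$ counts the ways of choosing $k$ distinct odd parts summing to $n$. Here $x$ is attached to each odd part once and to each even part twice, so that the exponent of $x$ records exactly the statistic defining $A(k,n)$ and $q$ records the size. The crucial structural fact is that in conditions (1)--(3) the admissibility of a consecutive pair $\lambda_i \ge \lambda_{i+1}$ depends only on the difference $\lambda_i-\lambda_{i+1}$ and on the residue of $\lambda_i$ modulo $8$, every difference $\ge 9$ being always admissible; this mod-$8$ packaging is, I expect, the point of the reformulation of Section~\ref{reformulation} of Siladi\'c's original mod-$16$ conditions.

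Next I would build a system of $q$-difference equations by peeling off the smallest part. For an integer $m \ge 1$ let $g_m(x) = g_m(x;q)$ be the generating function, with $x$ and $q$ weighted as above, for the partitions obeying (1)--(3) whose smallest part is $\ge m$; then $g_1(x)$ is the left-hand side above and $g_m(x) \to 1$ as $m \to \infty$. Conditioning on whether the smallest part equals $m$ or is at least $m+1$ gives
\[
g_m(x) \;=\; g_{m+1}(x) \;+\; w_m(x)\, h_m(x),
\]
where $w_m(x)$ is $xq^{m}$ for $m$ odd, $x^{2}q^{m}$ for even $m \ne 2$, and $0$ for $m=2$, and $h_m(x)$ is the generating function for admissible partitions whose smallest part is a legal successor of $m$. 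Since the set of legal successors of $m$ is $\{m+5, m+6, \dots\}$ with at most a few of $m+5,\dots,m+8$ deleted according to $m \bmod 8$, one may re-express $h_m(x)$ through the functions $g_{m'}(x)$ and $h_{m'}(x)$ with $m'\in\{m+5,\dots,m+8\}$; substituting repeatedly and grouping parts by their residue mod $8$ collapses everything into a closed finite system of linear $q$-difference equations among a bounded family of generating functions indexed by residues modulo $8$.

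Finally I would solve this system. Each equation strictly raises the smallest allowed part, so iterating them determines the coefficients of each $g_m(x)$ recursively; equivalently one argues by induction on the $q$-adic order, using the relations to pass from degree $n-1$ to degree $n$. One then checks that $\prod_{j \ge 0}(1 + x q^{2j+1})$, together with the finitely many "shifted" products that must serve as the values of the auxiliary functions $g_m$, satisfies precisely the same system with the same initial conditions $g_m(0)=1$; uniqueness of the power-series solution then forces $g_1(x) = \prod_{j \ge 0}(1 + x q^{2j+1})$, and comparing coefficients of $x^{k} q^{n}$ yields $A(k,n)=B(k,n)$.

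The step I expect to be the real obstacle is the derivation and simplification of the $q$-difference system. The four exceptional difference conditions, the vanishing weight at $m=2$, and the odd/even distinction (which changes both $w_m$ and the set of admissible residues) interact to produce a delicate case analysis modulo $8$ — or modulo $16$ if one keeps Siladi\'c's $\lambda_i+\lambda_{i+1}$ formulation — and the art is to choose exactly the right finite set of auxiliary generating functions so that the system closes and its solution is recognizably a product. Verifying that the product side satisfies the resulting relations is routine in principle but is where the bookkeeping is heaviest.
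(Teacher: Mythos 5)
Your setup is sound and close in spirit to the paper's: the mod-$8$ reformulation is indeed the content of Section~\ref{reformulation}, the weighting of $x$ (the paper's $t$) by one for odd parts and two for even parts is exactly the statistic of Theorem~\ref{refinement}, and a system of linear $q$-difference equations with a uniqueness argument by induction on the power of $q$ is the engine of the actual proof. The genuine gap is in your final step. You propose to verify that $\prod_{j\ge 0}(1+xq^{2j+1})$, ``together with the finitely many shifted products that must serve as the values of the auxiliary functions $g_m$,'' satisfies your system. But your family $\{g_m\}_{m\ge 1}$ is infinite (the mod-$8$ periodicity is in the \emph{shape} of the equations, not in the unknowns), and for $m\ge 2$ there is no reason to expect $g_m$ to be a shifted product at all: the correspondence with distinct odd parts is not part-by-part (an even part contributes $2$ to $k$, i.e.\ corresponds to a \emph{pair} of odd parts), so the smallest-part-truncated generating functions do not inherit a product form. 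Without explicit candidates $\widehat{g}_m$ for every $m$, the verification step cannot even be stated, and this is precisely where the whole difficulty of the theorem lives; your own closing paragraph concedes the point but does not resolve it.

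The paper avoids this obstacle by truncating at the \emph{largest} part rather than the smallest: with $G_N(t,q)$ the generating function for largest part at most $N$, one has $G_N\to G_\infty$ rather than $G_m\to 1$, and the goal becomes the functional equation $G_{2m}(t,q)=(1+tq)\,G_{2m-3}(tq^2,q)$, proved by induction on $m$ in four cases according to $m \bmod 4$. This requires no closed forms for any $G_N$; the substitution $t\mapsto tq^2$ does the work that your hypothetical shifted products were meant to do, and several auxiliary dissection identities (Lemmas~\ref{cas2} and~\ref{casdifficile}, obtained by removing the largest part once or twice and exploiting coincidences such as $e_{8N-2}(k-1,n-(8N+3))=e_{8N-3}(k-2,n-(8N+4))$) are needed to close the induction. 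Only after letting $m\to\infty$ does the product $\prod_{j\ge 0}(1+tq^{2j+1})$ appear, by iterating $G_\infty(t,q)=(1+tq)G_\infty(tq^2,q)$. To repair your argument you would either need to switch to largest-part truncation and find the analogous functional equation, or exhibit and verify genuine closed or recursive forms for all of your $g_m$ --- the latter being exactly the step you have not supplied.
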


\section{Reformulating the problem}
\label{reformulation}
Our idea is to find $q$-difference equations and use them to prove Theorem~\ref{siladic}, but its original formulation is not very convenient to manipulate combinatorially because it gives conditions on the sum of two consecutive parts of the partition. Therefore we will transform those conditions into conditions that only involve one part at a time.

\begin{lemma}
\label{equiv}
Conditions 
\begin{equation}
\label{cond5i}
\lambda_i - \lambda_{i+1} = 5 \Rightarrow \lambda_i + \lambda_{i+1} \not \equiv \pm 1, \pm 5, \pm 7 \mod 16,
\end{equation}
\begin{equation}
\label{cond6i}
\lambda_i - \lambda_{i+1} = 6 \Rightarrow \lambda_i + \lambda_{i+1} \not \equiv \pm 2, \pm 6 \mod 16,
\end{equation}
\begin{equation}
\label{cond7i}
\lambda_i - \lambda_{i+1} = 7 \Rightarrow \lambda_i + \lambda_{i+1} \not \equiv \pm 3 \mod 16,
\end{equation}
\begin{equation}
\label{cond8i}
\lambda_i - \lambda_{i+1} = 8 \Rightarrow \lambda_i + \lambda_{i+1} \not \equiv \pm 4 \mod 16,
\end{equation}
are respectively equivalent to conditions
\begin{equation}
\label{cond5}
\lambda_i - \lambda_{i+1} = 5 \Rightarrow \lambda_i \equiv 1, 4 \mod 8,
\end{equation}
\begin{equation}
\label{cond6}
\lambda_i - \lambda_{i+1} = 6 \Rightarrow \lambda_i \equiv 1, 3, 5, 7 \mod 8,
\end{equation}
\begin{equation}
\label{cond7}
\lambda_i - \lambda_{i+1} = 7 \Rightarrow \lambda_i \equiv 0, 1, 3, 4, 6, 7 \mod 8,
\end{equation}
\begin{equation}
\label{cond8}
\lambda_i - \lambda_{i+1} = 8 \Rightarrow \lambda_i \equiv 0, 1, 3, 4, 5, 7 \mod 8.
\end{equation}
\end{lemma}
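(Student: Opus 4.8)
The plan is to eliminate $\lambda_{i+1}$ from each congruence hypothesis using the fixed value of the difference. If $\lambda_i-\lambda_{i+1}=d$, then $\lambda_i+\lambda_{i+1}=2\lambda_i-d$, so a condition of the form ``$\lambda_i+\lambda_{i+1}\not\equiv S \pmod{16}$'' for a set $S$ of residues becomes ``$2\lambda_i\not\equiv S+d \pmod{16}$''. Since $2\lambda_i$ is always even, and since $2\lambda_i\bmod 16$ depends only on $\lambda_i\bmod 8$, this forbids a set of even residues for $2\lambda_i$ mod $16$, hence a set of residues for $\lambda_i$ mod $8$; taking complements yields exactly the allowed residues in \eqref{cond5}--\eqref{cond8}. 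The key elementary fact I would record first is that for $r\in\{0,1,\dots,7\}$ the values $2r\bmod 16$ run bijectively over the even residues $\{0,2,4,6,8,10,12,14\}$, and $2\lambda_i\equiv 2r\pmod{16}$ if and only if $\lambda_i\equiv r\pmod 8$.

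Then I would simply run the four cases. For \eqref{cond5i} with $d=5$: the hypothesis $\lambda_i+\lambda_{i+1}\not\equiv\pm1,\pm5,\pm7\pmod{16}$ becomes $2\lambda_i\notin\{4,6,0,10,12,14\}\pmod{16}$, i.e. $2\lambda_i\in\{2,8\}$, i.e. $\lambda_i\equiv1,4\pmod 8$, which is \eqref{cond5}. The cases $d=6,7,8$ are handled in exactly the same way: translate the forbidden residues by $d$, intersect with the even residues mod $16$, and read off the complementary residues mod $8$, obtaining \eqref{cond6}, \eqref{cond7} and \eqref{cond8} respectively. Each manipulation is a reversible equivalence — the substitution $\lambda_i+\lambda_{i+1}=2\lambda_i-d$ is an identity under the hypothesis $\lambda_i-\lambda_{i+1}=d$, and the passage between $2\lambda_i\bmod 16$ and $\lambda_i\bmod 8$ is a bijection — so the implications are genuinely equivalent, not merely one-directional.

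I do not expect a real obstacle here: the content is entirely a finite bookkeeping of residues modulo $16$. The one point to be careful about is parity consistency — for odd $d$ (the cases $d=5,7$) the residues involved for $\lambda_i+\lambda_{i+1}$ are odd, while for even $d$ (the cases $d=6,8$) they are even — but this is automatic since $2\lambda_i-d$ has the parity of $d$, so no case is vacuous or ill-posed.
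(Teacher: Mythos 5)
Your proposal is correct and follows essentially the same route as the paper: both eliminate $\lambda_{i+1}$ by passing to $2\lambda_i=(\lambda_i+\lambda_{i+1})+(\lambda_i-\lambda_{i+1})$ (equivalently $\lambda_i+\lambda_{i+1}=2\lambda_i-d$), translate the forbidden residues modulo $16$, reduce to residues of $\lambda_i$ modulo $8$, and take complements, with the remaining three cases handled identically. The residue computations you give for $d=5$ match the paper's, and your explicit remarks on reversibility and parity are fine.
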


\begin{proof}
Let us prove the first equivalence. The others are proved in exactly the same way.
We have
\begin{equation*}
\begin{aligned}
&\lambda_i - \lambda_{i+1} = 5 \Rightarrow \lambda_i + \lambda_{i+1} \not \equiv \pm 1, \pm 5, \pm 7 \mod 16
\\ \Leftrightarrow~~&\lambda_i - \lambda_{i+1} = 5 \Rightarrow \lambda_i + \lambda_{i+1} \not \equiv 1, 15, 5, 11, 7, 9 \mod 16
\\ \Leftrightarrow~~&\lambda_i - \lambda_{i+1} = 5 \Rightarrow 2 \lambda_i = \lambda_i + \lambda_{i+1} + \lambda_i - \lambda_{i+1} \not \equiv 6, 4, 10, 0, 12, 14 \mod 16
\\ \Leftrightarrow~~&\lambda_i - \lambda_{i+1} = 5 \Rightarrow \lambda_i \not \equiv 3, 2, 5, 0, 6, 7 \mod 8
\\ \Leftrightarrow~~&\lambda_i - \lambda_{i+1} = 5 \Rightarrow \lambda_i \equiv 1, 4 \mod 8.
\end{aligned}
\end{equation*}
Therefore condition~\eqref{cond5i} is equivalent to condition~\eqref{cond5}.
\end{proof}

By Lemma~\ref{equiv}, Theorem~\ref{siladic} is equivalent to the following theorem.

\begin{theorem}
The number of partitions $\lambda_1 + ...+ \lambda_s$ of an integer $n$ into parts different from $2$ such that difference between two consecutive parts is at least $5$ (i.e.. $\lambda_i - \lambda_{i+1} \geq 5$) and
\begin{equation*}
\begin{aligned}
&\lambda_i - \lambda_{i+1} = 5 \Rightarrow \lambda_i \equiv 1, 4 \mod 8,
\\&\lambda_i - \lambda_{i+1} = 6 \Rightarrow \lambda_i \equiv 1, 3, 5, 7 \mod 8,
\\&\lambda_i - \lambda_{i+1} = 7 \Rightarrow \lambda_i \equiv 0, 1, 3, 4, 6, 7 \mod 8,
\\&\lambda_i - \lambda_{i+1} = 8 \Rightarrow \lambda_i \equiv 0, 1, 3, 4, 5, 7 \mod 8,
\end{aligned}
\end{equation*}
is equal to the number of partitions of $n$ into distinct odd parts.
\end{theorem}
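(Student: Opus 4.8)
The plan is to prove the stronger statement Theorem~\ref{refinement}, from which the theorem above follows immediately on setting $x=1$. Throughout, let $q$ record the size $|\lambda|$ of a partition and $x$ record the quantity $c(\lambda):=(\text{number of odd parts of }\lambda)+2\,(\text{number of even parts of }\lambda)$; then the left-hand side of Theorem~\ref{refinement} is generated by $A(x,q):=\sum_{\lambda}x^{c(\lambda)}q^{|\lambda|}$, the sum running over the partitions $\lambda$ obeying conditions (1)--(3), and its right-hand side is generated by $\prod_{j\ge 0}(1+xq^{2j+1})$. Everything therefore reduces to the formal power series identity
\begin{equation*}
A(x,q)=\prod_{j\ge 0}(1+xq^{2j+1})=\sum_{k\ge 0}\frac{x^{k}q^{k^{2}}}{(1-q^{2})(1-q^{4})\cdots(1-q^{2k})}.
\end{equation*}

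To reach $A(x,q)$ I would introduce generating functions that bound the largest part. Since conditions (3) constrain only the \emph{larger} element $\lambda_i$ of a consecutive pair, and only when the gap $\lambda_i-\lambda_{i+1}$ lies in $\{5,6,7,8\}$, the data governing any recursion is the value of the largest part together with its residue modulo $8$. I would thus define $a_N=a_N(x,q)$ to be the generating function of the valid partitions whose largest part is at most $N$, with $a_0=1$ accounting for the empty partition and with the standing convention that a part equal to $2$ is never used; and, where the recursion demands it, I would split $a_N$ into a finite family of series indexed by the residue of the largest part. Deleting the largest part $m$ of such a partition leaves a valid partition that is either empty or has largest part in $\{1,\dots,m-5\}\setminus\{2\}$, the admissible range being further thinned by the congruence imposed on $m$ when the new gap equals $5,6,7$ or $8$ --- and conditions \eqref{cond5}--\eqref{cond8} specify exactly which gaps remain available for each residue of $m$. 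Writing this decomposition down for consecutive values of $N$ yields the system of $q$-difference equations of Section~\ref{qdiff}; moreover $a_N$ and $a_{N+1}$ agree modulo $q^{N+1}$, so $a_N\to A(x,q)$.

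It then remains to solve the system. I would proceed by induction on $N$, using the $q$-difference equations to show that $a_N$ --- or the vector of residue-refined series assembled from it --- agrees with the relevant truncation of $\prod_{j\ge 0}(1+xq^{2j+1})$, and then let $N\to\infty$. The main obstacle is precisely that conditions (3) keep the recursion for $a_N$, regarded as a single series, from closing up: which largest parts the residual partition may have depends on $N\bmod 8$, so one is forced to propagate several auxiliary series at once and to check that the resulting linear system is self-contained. Verifying that this system collapses to the clean product above --- a collapse that should rest on the particular residue classes appearing in \eqref{cond5}--\eqref{cond8}, arranged so that the unwanted contributions telescope away --- is the heart of the argument; once the correct auxiliary functions have been pinned down, each step of the induction is a finite, if somewhat lengthy, verification.
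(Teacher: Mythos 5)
Your setup coincides with the paper's: you bound the largest part, derive $q$-difference equations by deleting it, and plan to conclude by letting $N\to\infty$. The genuine gap is in what you propose to induct on. You say you will show that $a_N$ ``agrees with the relevant truncation of $\prod_{j\ge 0}(1+xq^{2j+1})$,'' but the $a_N$ are not truncations of that product in any usable sense: the statement ``$a_N$ agrees with the product modulo $q^{N+1}$'' is just a restatement of the theorem and carries no inductive leverage, and no finite subproduct $(1+xq)\cdots(1+xq^{2r+1})$ equals any $a_N$. The paper's actual induction hypothesis is the functional equation
\begin{equation*}
G_{2m}(t,q)=(1+tq)\,G_{2m-3}(tq^2,q),
\end{equation*}
whose two essential features --- the substitution $t\mapsto tq^{2}$ inside the right-hand side and the index shift by $3$ rather than $2$ --- are precisely what your outline does not identify; only upon iterating this equation and passing to the limit does the infinite product appear. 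Without this (or an equivalent closed induction statement), ``solve the system'' is not yet a proof plan.

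Moreover, even granting the correct induction statement, the verification is not the routine finite check you describe, and you explicitly defer it as ``the heart of the argument.'' The eight basic $q$-difference equations \eqref{eq1}--\eqref{eq8} do not by themselves close the induction: the paper must split into the four residues of $m$ modulo $4$, establish further identities for the quantities $e_N(k,n)$ (Lemmas~\ref{cas2} and~\ref{casdifficile}), including cancellations such as $e_{8N-2}(k-1,n-(8N+3))=e_{8N-3}(k-2,n-(8N+4))$ that require peeling off \emph{two} largest parts and comparing the residual sets, and check the base cases $m=1,\dots,8$ separately. So while your overall architecture matches the paper's, the key idea that makes the induction close, together with the auxiliary lemmas needed to run it, is missing rather than merely tedious.
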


Moreover for every $n$, the sets of partitions are exactly the same as those in Theorem~\ref{siladic}, so this is just a reformulation of the same theorem.

\section{Obtaining $q$-difference equations}
\label{qdiff}
Now that we have stated Theorem~\ref{siladic} in a  more convenient manner, we can establish our $q$-difference equations and prove Theorem~\ref{refinement}.

For $n \in N$, $k \in \N^*$, let $a_N(k,n)$ denote the number of partitions $\lambda_1+...+\lambda_s$ counted by $A(k,n)$ such that the largest part $\lambda_1$ is at most $N$.
Let also $e_N(k,n)$ denote the number of partitions $\lambda_1+...+\lambda_s$ counted by $A(k,n)$ such that the largest part $\lambda_1$ is equal to $N$.
We define, for $|t|<1$, $|q|<1$, $N \in \N^*$, 
\begin{equation*}
G_N (t,q) = 1+ \sum_{k=1}^{\infty} \sum_{n=1}^{\infty} a_N(k,n) t^k q^n.
\end{equation*}
Thus $G_{\infty}(t,q)= \lim_{N \rightarrow \infty} G_N (t,q)$ is the generating function for the partitions counted by $A(k,n)$.% such that the power of $q$ counts the number partitioned and the power of $t$ counts the number of odd parts plus twice the number of even parts.

Our goal is to show that $$\forall N \in \N^*,  G_{2N}(t,q)=(1+tq)G_{2N-3}(tq^2,q).$$ Indeed we can then let $N$ go to infinity and deduce $$G_{\infty}(t,q)=(1+tq)G_{\infty}(tq^2,q)=(1+tq)(1+tq^3)G_{\infty}(tq^4,q)=...,$$ which means that $$G_{\infty}(t,q)= \prod_{k=0}^{\infty} \left(1+ tq^{2k+1}\right),$$ which is the generating function for partitions counted by $B(k,n).$%into distinct odd parts, where the power of $q$ counts the number partitioned and the power of $t$ counts the number of parts.

Let us now state some $q$-difference equations that we will use throughout our proof in Section~\ref{induction}. We have the following identities:

\begin{lemma}
\label{eqd}
For all $k,n,N \in \N^*,$
\begin{equation}
\label{eqd1}
a_{8N}(k,n)=a_{8N-1}(k,n)+a_{8N-7}(k-2,n-8N),
\end{equation}
\begin{equation}
\label{eqd2}
a_{8N+1}(k,n)=a_{8N}(t,q)+a_{8N-4}(k-1,n-(8N+1)),
\end{equation}
\begin{equation}
\label{eqd3}
a_{8N+2}(k,n)=a_{8N+1}(k,n)+a_{8N-7}(k-2,n-(8N+2)),
\end{equation}
\begin{equation}
\label{eqd4}
a_{8N+3}(k,n)=a_{8N+2}(k,n)+a_{8N-3}(k-1,n-(8N+3)),
\end{equation}
\begin{equation}
\label{eqd5}
a_{8N+4}(k,n)=a_{8N+3}(k,n)+a_{8N-3}(k-2,n-(8N+4))+a_{8N-7}(k-3,n-(16N+3)),
\end{equation}
\begin{equation}
\label{eqd6}
a_{8N+5}(k,n)=a_{8N+4}(k,n)+a_{8N-3}(k-1,n-(8N+5))+a_{8N-7}(k-2,n-(16N+4)),
\end{equation}
\begin{equation}
\label{eqd7}
a_{8N+6}(k,n)=a_{8N+5}(k,n)+a_{8N-3}(k-2,n-(8N+6))+a_{8N-7}(k-3,n-(16N+5)),
\end{equation}
\begin{equation}
\label{eqd8}
a_{8N+7}(k,n)=a_{8N+6}(k,n)+a_{8N+1}(k-1,n-(8N+7)).
\end{equation}
\end{lemma}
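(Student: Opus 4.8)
The plan is to derive all eight identities from the single elementary relation
\begin{equation*}
a_M(k,n) = a_{M-1}(k,n) + e_M(k,n), \qquad M \geq 1,
\end{equation*}
together with an analysis of $e_M(k,n)$ according to the residue of $M$ modulo $8$. Throughout I would adopt the natural boundary conventions $a_M(0,0)=e_0(0,0)=1$ and $a_M(k,n)=e_M(k,n)=0$ as soon as $k<0$ or $n<0$ (note also $e_2\equiv 0$, since a part equal to $2$ is forbidden), so that the empty partition and one-part partitions require no special treatment.

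The first real step is to compute $e_M(k,n)$. Given a partition $\lambda_1+\dots+\lambda_s$ counted by $A(k,n)$ with $\lambda_1=M$, deleting $\lambda_1$ removes $M$ from the size and $w(M)$ from $k$, where $w(M)=1$ if $M$ is odd and $w(M)=2$ if $M$ is even; the remaining partition $\mu=\lambda_2+\dots+\lambda_s$ is again one of those counted by $A$, and the only condition still tying it to $M$ is that its largest part $\mu_1$ be an \emph{admissible successor} of $M$, meaning either $M-\mu_1\geq 9$, or $M-\mu_1\in\{5,6,7,8\}$ together with $M$ lying in the residue class modulo $8$ imposed by condition~(3) of Theorem~\ref{refinement} (equivalently, by Lemma~\ref{equiv}). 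Writing $S_M$ for the set of admissible successors of $M$ (with $\mu_1=0$ understood as ``$\mu$ empty''), this gives
\begin{equation*}
e_M(k,n) = \sum_{\mu_1\in S_M} e_{\mu_1}\bigl(k-w(M),\,n-M\bigr).
\end{equation*}

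Next I would determine $S_M$ for each residue of $M$ modulo $8$; this is a finite check of the four difference conditions against each of the four candidate values $M-5,M-6,M-7,M-8$. For $M\equiv 0,1,2,3,7\pmod 8$ one finds that $S_M$ is an initial segment $\{0,1,\dots,M'\}$, with $M'$ equal to $8N-7,\ 8N-4,\ 8N-7,\ 8N-3,\ 8N+1$ when $M=8N,\ 8N+1,\ 8N+2,\ 8N+3,\ 8N+7$; since $e_2\equiv 0$ one then has $\sum_{\mu_1\in S_M}e_{\mu_1}(k',n')=a_{M'}(k',n')$, giving \eqref{eqd1}, \eqref{eqd2}, \eqref{eqd3}, \eqref{eqd4}, and \eqref{eqd8}. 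For $M=8N+4,8N+5,8N+6$ the computation instead gives $S_M=\{0,1,\dots,8N-3\}\cup\{8N-1\}$, which is \emph{not} an interval: the value $8N-1$ is an admissible successor while $8N-2$ is not. Hence for these three residues
\begin{equation*}
e_M(k,n) = a_{8N-3}\bigl(k-w(M),\,n-M\bigr) + e_{8N-1}\bigl(k-w(M),\,n-M\bigr).
\end{equation*}
Since $8N-1=8(N-1)+7\equiv 7\pmod 8$, I can feed this back into the initial-segment case already treated (the residue-$7$ analysis, equivalently \eqref{eqd8} with $N$ replaced by $N-1$) to get $e_{8N-1}(k',n')=a_{8N-7}\bigl(k'-1,\,n'-(8N-1)\bigr)$; substituting and simplifying the arguments then yields \eqref{eqd5}, \eqref{eqd6}, \eqref{eqd7}.

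I expect the main obstacle to be the bookkeeping in the third step: one must carefully convert each implication ``$\lambda_i-\lambda_{i+1}=d\Rightarrow\lambda_i\equiv\cdots\pmod 8$'' into a statement about which of $M-5,M-6,M-7,M-8$ may play the role of $\mu_1$, and in particular notice the non-interval phenomenon for $M\equiv 4,5,6\pmod 8$ that forces the extra $e_{8N-1}$ summand — a term that is resolved in exactly one further step precisely because $8N-1\equiv 7\pmod 8$ lands in an already-handled case. The remaining points — the boundary conventions, the empty sub-partition, and small values of $N$ — are routine once those conventions are fixed.
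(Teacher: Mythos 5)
Your proposal is correct and follows essentially the same route as the paper: split off the largest part via $a_M = a_{M-1} + e_M$, determine which second parts are admissible from the difference conditions, and in the residues $4,5,6 \bmod 8$ handle the extra non-interval successor $8N-1$ by one further largest-part removal (the residue-$7$ case). Your systematic tabulation of the successor sets $S_M$, including the observation that $e_2\equiv 0$ and that $S_M$ fails to be an interval exactly for $M\equiv 4,5,6 \pmod 8$, matches the two representative cases the paper works out and correctly fills in the ones it leaves to the reader.
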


\begin{proof}
We prove equations~\eqref{eqd1} and~\eqref{eqd5}. Equations~\eqref{eqd2},~\eqref{eqd3},~\eqref{eqd4} and~\eqref{eqd8} are proved in the same way as equation~\eqref{eqd1}, and equations~\eqref{eqd6} and~\eqref{eqd7} in the same way as equation~\eqref{eqd5}.

Let us prove \eqref{eqd1}.We divide the set of partitions enumerated by $a_{8N}(k,n)$ into two sets, those with largest part less than $8N$ and those with largest part equal to $8N$. Thus
\begin{equation*}
a_{8N}(k,n)=a_{8N-1}(k,n)+e_{8N}(k,n).
\end{equation*}
Let us now consider a partition $\lambda_1+ \lambda_2 + ... + \lambda_s$ counted by $e_{8N}(k,n).$ By Conditions~\eqref{cond5}-\eqref{cond8}, $\lambda_1 - \lambda_2 \geq 7$, therefore $\lambda_2 \leq 8N-7.$ Let us remove the largest part $\lambda_1=8N$. The largest part is now $\lambda_2 \leq 8N-7$, the number partitioned is $n-8N$, and we removed an even part so $k$ becomes $k-2$. We obtain a partition counted by $a_{8N-7}(k-2,n-8N).$ This process is reversible, because we can add a part equal to $8N$ to any partition counted by $a_{8N-7}(k-2,n-8N)$ and obtain a partition counted by $e_{8N}(k,n)$ so we have a bijection between partitions counted by $e_{8N}(k,n)$ and those counted by $a_{8N-7}(k-2,n-8N).$
Therefore $$e_{8N}(k,n)=a_{8N-7}(k-2,n-8N)$$ for all $k,n,N \in \N^*$ and~\eqref{eqd1} is proved.

Let us now prove~\eqref{eqd5}.
Again let us divide the set of partitions enumerated by $a_{8N+4}(k,n)$ into two sets, those with largest part less than $8N+4$ and those with largest part equal to $8N+4$. Thus
\begin{equation*}
a_{8N+4}(k,n)=a_{8N+3}(k,n)+e_{8N+4}(k,n).
\end{equation*}
Let us now consider a partition $\lambda_1+ \lambda_2 + ... + \lambda_s$ counted by $e_{8N+4}(k,n).$ By Conditions~\eqref{cond5}-\eqref{cond8}, $\lambda_1 - \lambda_2 = 5$ or $\lambda_1 - \lambda_2 \geq 7$, therefore $\lambda_2=8N-1$ or $\lambda_2 \leq 8N-3$. Let us remove the largest part $\lambda_1=8N+4$. If $\lambda_2=8N-1$, we obtain a partition counted by $e_{8N-1}(k-1,n-(8N+5))$. If $\lambda_2 \leq 8N-3$, we obtain a partition counted by $a_{8N-3}(k-1,n-(8N+4)).$ This process is also reversible and the following holds:
\begin{equation*}
e_{8N+4}(k,n)= e_{8N-1}(k-1,n-(8N+4)) + a_{8N-3}(k-1,n-(8N+4))
\end{equation*}
Moreover, again by removing the largest part, we can prove that $$e_{8N-1}(k-1,n-(8N+4))=a_{8N-7}(k-2,n-(16N+3)).$$
This concludes the proof of~\eqref{eqd5}.
\end{proof}

The equations of Lemma~\ref{eqd} lead to the following $q$-difference equations:

\begin{lemma}
For all $N \in \N^*,$
\label{equations}
\begin{equation}
\label{eq1}
G_{8N}(t,q)=G_{8N-1}(t,q)+t^{2}q^{8N} G_{8N-7}(t,q),
\end{equation}
\begin{equation}
\label{eq2}
G_{8N+1}(t,q)=G_{8N}(t,q)+tq^{8N+1} G_{8N-4}(t,q),
\end{equation}
\begin{equation}
\label{eq3}
G_{8N+2}(t,q)=G_{8N+1}(t,q)+t^{2}q^{8N+2} G_{8N-7}(t,q),
\end{equation}
\begin{equation}
\label{eq4}
G_{8N+3}(t,q)=G_{8N+2}(t,q)+tq^{8N+3} G_{8N-3}(t,q),
\end{equation}
\begin{equation}
\label{eq5}
G_{8N+4}(t,q)=G_{8N+3}(t,q)+t^2q^{8N+4} G_{8N-3}(t,q)+t^3q^{16N+3}G_{8N-7}(t,q),
\end{equation}
\begin{equation}
\label{eq6}
G_{8N+5}(t,q)=G_{8N+4}(t,q)+tq^{8N+5} G_{8N-3}(t,q)+t^2q^{16N+4}G_{8N-7}(t,q),
\end{equation}
\begin{equation}
\label{eq7}
G_{8N+6}(t,q)=G_{8N+5}(t,q)+t^2q^{8N+6} G_{8N-3}(t,q)+t^3q^{16N+5}G_{8N-7}(t,q),
\end{equation}
\begin{equation}
\label{eq8}
G_{8N+7}(t,q)=G_{8N+6}(t,q)+tq^{8N+7} G_{8N+1}(t,q).
\end{equation}
\end{lemma}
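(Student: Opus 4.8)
The plan is to obtain each of \eqref{eq1}--\eqref{eq8} from the corresponding recurrence of Lemma~\ref{eqd} by multiplying both sides by $t^{k}q^{n}$ and summing over all $k$ and $n$. Before doing so I would fix the boundary behaviour of the coefficients: extend $a_{N}$ by declaring $a_{N}(0,0)=1$ (the empty partition) and $a_{N}(k,n)=0$ whenever $k\le 0$ or $n\le 0$ with $(k,n)\neq(0,0)$, so that
\begin{equation*}
G_{N}(t,q)=\sum_{k\ge 0}\sum_{n\ge 0}a_{N}(k,n)\,t^{k}q^{n}.
\end{equation*}
With this convention the recurrences of Lemma~\ref{eqd} hold for \emph{all} $k,n\ge 0$, including the values at which the relevant partition is a single part: for instance the unique partition counted by $a_{8N}(2,8N)$ is the single part $8N$, and on the right-hand side of \eqref{eqd1} it is exactly the term $a_{8N-7}(0,0)=1$. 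I would note this explicitly, since it is the one point where care is needed.

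For \eqref{eq1}, multiply \eqref{eqd1} by $t^{k}q^{n}$ and sum over $k,n\ge 0$. The left-hand side is $G_{8N}(t,q)$ by definition, and the first term on the right contributes $G_{8N-1}(t,q)$. In the second term put $j=k-2$, $m=n-8N$; since $a_{8N-7}(j,m)=0$ unless $j,m\ge 0$, this gives
\begin{equation*}
\sum_{k,n\ge 0}a_{8N-7}(k-2,n-8N)\,t^{k}q^{n}
=t^{2}q^{8N}\sum_{j,m\ge 0}a_{8N-7}(j,m)\,t^{j}q^{m}
=t^{2}q^{8N}\,G_{8N-7}(t,q),
\end{equation*}
which is precisely \eqref{eq1}. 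The same reindexing applied to \eqref{eqd2}, \eqref{eqd3}, \eqref{eqd4} and \eqref{eqd8} yields \eqref{eq2}, \eqref{eq3}, \eqref{eq4} and \eqref{eq8}: in each case a shift of $k$ by $a$ produces the factor $t^{a}$ and a shift of $n$ by $b$ produces the factor $q^{b}$, matching the exponents in the statement.

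The three-term identities \eqref{eq5}, \eqref{eq6}, \eqref{eq7} are handled identically, simply summing the two shifted terms separately. For example, summing \eqref{eqd5} against $t^{k}q^{n}$ turns $a_{8N-3}\big(k-2,n-(8N+4)\big)$ into $t^{2}q^{8N+4}G_{8N-3}(t,q)$ and $a_{8N-7}\big(k-3,n-(16N+3)\big)$ into $t^{3}q^{16N+3}G_{8N-7}(t,q)$, giving \eqref{eq5}; \eqref{eqd6} and \eqref{eqd7} are the same. All the series involved converge absolutely for $|t|<1$, $|q|<1$ (equivalently, one may work with formal power series in $q$ with polynomial coefficients in $t$, as each power of $q$ receives only finitely many contributions), so the reindexing is legitimate.

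The computation itself is entirely routine; the only genuine obstacle is the bookkeeping at the boundary described in the first paragraph. If one forgot the convention $a_{N}(0,0)=1$, the reindexed shifted sum would read $t^{2}q^{8N}\big(G_{8N-7}(t,q)-1\big)$ and one would be left with a spurious monomial $-t^{2}q^{8N}$; conversely, that same monomial is exactly what makes \eqref{eqd1} literally correct at $(k,n)=(2,8N)$. Making sure this matching of ``one-part'' partitions with the constant term $1$ is consistent across all eight recurrences is the step I would check most carefully, after which \eqref{eq1}--\eqref{eq8} follow.
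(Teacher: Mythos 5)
Your proposal is correct and is exactly the argument the paper intends: the paper simply asserts that the recurrences of Lemma~\ref{eqd} ``lead to'' equations~\eqref{eq1}--\eqref{eq8}, and the step it leaves implicit is precisely your multiplication by $t^kq^n$ and reindexed summation, with the constant term $1$ in the definition of $G_N$ playing the role of your convention $a_N(0,0)=1$. Your explicit check of the boundary case (the one-part partitions matching the empty partition on the right-hand side) is a careful touch the paper omits, but it is the same proof.
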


Some more $q$-difference equations will be stated in the proof of Section~\ref{induction} as their interest arises from the proof itself.

Even if we use the idea of counting certain parts twice as in Andrews' proof of Schur's theorem~\cite{Andrews3} and the author's proof of Schur's theorem for overpartitions~\cite{Dousse}, the consequent number of equations (we have $8$ equations here while there were only $3$ equations in the proofs above mentioned) make it difficult to find directly a recurrence equation satisfied by $G_{8N}(t,q)$ and use the same method. Therefore we proceed differently as shown in next section.

\section{Proof of Theorem~\ref{refinement}}
\label{induction}
In this section we prove the following theorem by induction:

\begin{theorem}
\label{main}
For all $m \in \N^*,$
\begin{equation}
\label{2N}
G_{2m}(t,q)=(1+tq)G_{2m-3}(tq^2,q).
\end{equation}
%\begin{equation}
%\label{mod0}
%G_{8N}(t,q)=(1+tq)G_{8N-3}(tq^2,q),
%\end{equation}
%\begin{equation}
%\label{mod4}
%G_{8N+4}(t,q)=(1+tq)G_{8N+1}(tq^2,q),
%\end{equation}
%\begin{equation}
%\label{mod6}
%G_{8N+6}(t,q)=(1+tq)G_{8N+3}(tq^2,q).
%\end{equation}
\end{theorem}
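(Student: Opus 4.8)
The plan is to establish \eqref{2N} by strong induction on $m$, fed by the $q$-difference equations of Lemma~\ref{equations} together with a handful of further relations of the same type that I would derive along the way. As a preliminary I would adopt the convention $G_N(t,q)=1$ for every integer $N\le 0$ (the empty partition being the only one all of whose parts are $\le 0$), which makes both sides of \eqref{2N} meaningful for small $m$, and then verify \eqref{2N} by hand for a bounded initial segment of values of $m$, reading off $G_N$ explicitly for small $N$. These base cases are genuinely necessary: the equations of Lemma~\ref{eqd} and Lemma~\ref{equations} are asserted only for $N\in\N^{*}$, that is, only once the largest admissible part is at least $8$, so the induction cannot start before a fixed number of initial cases have been checked directly.

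For the inductive step I would treat the four residues of $m$ modulo $4$ separately, so that $2m$ is one of $8N,\ 8N+2,\ 8N+4,\ 8N+6$ with $N$ large, and in each case begin from the matching identity among \eqref{eq1}, \eqref{eq3}, \eqref{eq5}, \eqref{eq7}. Each of these writes $G_{2m}(t,q)$ in terms of generating functions of strictly smaller index, but those include the odd-indexed functions $G_{8N-1},\,G_{8N-3},\,G_{8N-7},\,G_{8N+1}$, to which the induction hypothesis --- which only concerns even indices --- does not apply. The crucial preparatory step is to get rid of them: writing $8N-1=8(N-1)+7$, $8N-3=8(N-1)+5$, and so on, I would rewrite each odd-indexed $G$ using the ``odd-from-even'' identities \eqref{eq2}, \eqref{eq4}, \eqref{eq6}, \eqref{eq8} applied one block of eight lower, and iterate. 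The process terminates because the indices strictly decrease, and the intermediate relations it generates are precisely the additional $q$-difference equations that the section announces; the upshot is that $G_{2m}(t,q)$ gets rewritten as an explicit $\Z[t,q]$-linear combination of even-indexed $G_j(t,q)$ with $j<2m$ (together, in the small cases, with the constants coming from $G_{N}=1$, $N\le 0$).

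Now I would substitute the induction hypothesis $G_j(t,q)=(1+tq)\,G_{j-3}(tq^{2},q)$ into every term, which automatically factors out a common $1+tq$. What is left to prove is that the resulting $\Z[t,q]$-combination of the functions $G_{j-3}(tq^{2},q)$ equals $G_{2m-3}(tq^{2},q)$ --- and this is exactly a copy of the very $q$-difference equations of Lemma~\ref{equations} and of the auxiliary ones, but with $t$ replaced by $tq^{2}$ and the indices shifted down by three. Since those equations hold identically in $t$, they hold with $tq^{2}$ in place of $t$, so the remaining identity is true and the induction closes. (Concretely, in the case $2m=8N$ the identity to be checked reduces to the relation $G_{8N-3}(t,q)-G_{8N-5}(t,q)=(tq^{8N-3}+t^{2}q^{8N-4})G_{8N-11}(t,q)+(t^{2}q^{16N-12}+t^{3}q^{16N-13})G_{8N-15}(t,q)$ read at $tq^{2}$, and that relation is just \eqref{eq5} plus \eqref{eq6} taken one block lower.)

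The part I expect to be the real labour is the bookkeeping in the middle: one must track powers of $t$ and $q$ through the substitution $t\mapsto tq^{2}$, which sends a coefficient $t^{a}q^{b}$ to $t^{a}q^{b+2a}$, and simultaneously through the reindexings by eight, and then verify that every coefficient matches. I expect the case $2m=8N+4$, governed by \eqref{eq5}, to be the most delicate one, since that equation already carries three terms on its right-hand side, including the doubly shifted contribution $t^{3}q^{16N+3}G_{8N-7}(t,q)$, so arranging for everything to collapse onto a single shifted instance of an equation takes the most care. A last small point is to choose the base of the induction large enough that every equation invoked in the inductive step genuinely has $N\ge 1$ (and $N-1\ge 1$ wherever a lower block is used).
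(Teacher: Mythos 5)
Your overall architecture coincides with the paper's: strong induction with machine-checked base cases $m=1,\dots,8$, four subcases according to $m\bmod 4$, elimination of the odd-indexed generating functions, application of the induction hypothesis, and recognition of the resulting combination as an expansion of $G_{2m-3}(tq^2,q)$. Your treatment of the case $2m=8N$ is exactly the paper's argument and is correct, as is (essentially) the case $2m=8N+2$, where $G_{8N+1}$ and $G_{8N-7}=G_{8(N-1)+1}$ both resolve into even indices after a single application of \eqref{eq2}.

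The gap is in your elimination mechanism for the cases $2m=8N+4$ and $2m=8N+6$. There the right-hand sides of \eqref{eq5} and \eqref{eq7} contain $G_{8N-3}=G_{8(N-1)+5}$, and expanding it by \eqref{eq6} one block lower reintroduces $G_{8N-11}=G_{8(N-2)+5}$, which must again be expanded by \eqref{eq6}, and so on: the ``iterate until even'' cascade does terminate, but only at the base of the recursion, producing an expansion whose length grows with $N$ and, crucially, whose image under the induction hypothesis cannot be matched against $G_{2m-3}(tq^2,q)$ using \eqref{eq1}--\eqref{eq8} alone (for instance, in the case $2m=8N+6$ the target expansion \eqref{eq34} contains $G_{8N-3}(tq^2,q)$, which is not of the form $G_{j-3}(tq^2,q)$ for any even $j$ in your cascade, and the only equation that lowers an index $\equiv 5 \bmod 8$ is \eqref{eq6}, which reintroduces an even index on the wrong side). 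What is actually needed is a different elimination: the identities
\begin{equation*}
(1+tq)\,G_{8N-3}(t,q)=G_{8N-2}(t,q)+tq\,G_{8N-4}(t,q),\qquad
(1+tq)\,G_{8N-1}(t,q)=G_{8N}(t,q)+tq\,G_{8N-2}(t,q),
\end{equation*}
equivalently $e_{8N-2}(k,n)=e_{8N-3}(k-1,n-1)$ and $e_{8N}(k,n)=e_{8N-1}(k-1,n-1)$, which exploit the factor $1+tq$ already present in front of the odd-indexed terms to replace them by \emph{two} even-indexed terms. These are the ``additional $q$-difference equations'' the section announces; the paper obtains them by going back to the partitions and proving Lemmas~\ref{cas2} and~\ref{casdifficile} together with \eqref{pif1}--\eqref{pif2} (showing, e.g., that $e_{8N-2}$ and $e_{8N-3}$ of shifted arguments both equal $e_{8N-9}+a_{8N-11}$ of appropriate arguments). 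They can alternatively be read off by comparing \eqref{eq6} with \eqref{eq7} and \eqref{eq8} with \eqref{eq1}, whose right-hand sides differ by an exact factor of $tq$ --- but either way this is a genuinely different move from iterating the odd-from-even equations one block lower, and without it the induction does not close in these two cases.
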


\subsection{Initialisation}
First we need to check some initial cases.

With the initial conditions
\begin{equation*}
G_0(t,q)=1,
\end{equation*}
\begin{equation*}
G_1(t,q)=1+tq,
\end{equation*}
\begin{equation*}
G_2(t,q)=1+tq,
\end{equation*}
\begin{equation*}
G_3(t,q)=G_2(t,q)+tq^3,
\end{equation*}
\begin{equation*}
G_4(t,q)=G_3(t,q)+t^2q^4,
\end{equation*}
\begin{equation*}
G_5(t,q)=G_4(t,q)+tq^5,
\end{equation*}
\begin{equation*}
G_6(t,q)=G_5(t,q)+t^2q^6,
\end{equation*}
\begin{equation*}
G_7(t,q)=G_6(t,q)+tq^7+t^2q^8,
\end{equation*}
and equations~\eqref{eq1}-\eqref{eq8}, we use MAPLE to check that Theorem~\ref{main} is verified for $m=1,...,8.$

Let us now assume that Theorem~\ref{main} is true for all $k \leq m-1$ and show that equation~\eqref{2N} is also satisfied for $m$. To do so, we will consider $4$ different cases: $m \equiv 0 \mod 4,$ $m \equiv 1 \mod 4,$ $m \equiv 2 \mod 4$ and $m \equiv 3 \mod 4.$

\subsection{First case: $m \equiv 0 \mod 4$}
We start by studying the case where $m=4N$ with $N \geq 2.$ We want to prove that $$G_{8N}(t,q)=(1+tq)G_{8N-3}(tq^2,q).$$

Replacing $N$ by $N-1$ in~\eqref{eq8} and substituting into~\eqref{eq1}, we obtain
\begin{equation}
\label{plic}
G_{8N}(t,q)=G_{8N-2}(t,q)+\left(tq^{8N-1}+t^2q^{8N}\right)G_{8N-7}(t,q).
\end{equation}
We now replace $N$ by $N-1$ in~\eqref{eq2} and substitute into~\eqref{plic}. This gives
\begin{equation*}
G_{8N}(t,q)=G_{8N-2}(t,q)+(1+tq)tq^{8N-1}G_{8N-8}(t,q)+(1+tq)t^2q^{16N-8}G_{8N-12}(t,q).
\end{equation*}
Then by the induction hypothesis,
\begin{equation}
\label{ploc}
\begin{aligned}
G_{8N}(t,q)=(1+tq)\big[&G_{8N-5}(tq^2,q)+(1+tq)tq^{8N-1}G_{8N-11}(tq^2,q)
\\&+(1+tq)t^2q^{16N-8}G_{8N-15}(tq^2,q)\big].
\end{aligned}
\end{equation}
Replacing $N$ by $N-1$ and $t$ by $tq^2$ in~\eqref{eq5}, we obtain
\begin{equation}
\label{eq1*}
G_{8N-4}(tq^2,q)=G_{8N-5}(tq^2,q)+t^2q^{8N}G_{8N-11}(tq^2,q)+t^3q^{16N-7}G_{8N-15}(tq^2,q).
\end{equation}
Replacing $N$ by $N-1$ and $t$ by $tq^2$ in~\eqref{eq5} gives
\begin{equation}
\label{eq2*}
G_{8N-3}(tq^2,q)=G_{8N-4}(tq^2,q)+tq^{8N-1}G_{8N-11}(tq^2,q)+t^2q^{16N-8}G_{8N-15}(tq^2,q).
\end{equation}
Adding~\eqref{eq1*} and~\eqref{eq2*}, we get
\begin{equation*}
\begin{aligned}
G_{8N-3}(tq^2,q)&=G_{8N-5}(tq^2,q)+(1+tq)tq^{8N-1}G_{8N-11}(tq^2,q)
\\&+(1+tq)t^2q^{16N-8}G_{8N-15}(tq^2,q).
\end{aligned}
\end{equation*}
Thus by~\eqref{ploc},we deduce that
\begin{equation*}
G_{8N}(t,q)=(1+tq)G_{8N-3}(tq^2,q).
\end{equation*}

It remains now to treat the cases $m \equiv 1,2,3 \mod 4.$

\subsection{Second case: $m \equiv 1 \mod 4$}
We now assume that $m=4N+1$ with $N \geq 2$ and prove that $$G_{8N+2}(t,q)=(1+tq)G_{8N-1}(tq^2,q).$$

Replacing $N$ by $N-1$ in~\eqref{eq8}, we obtain
\begin{equation}
\label{cas2eq1}
G_{8N-1}(t,q)=G_{8N-2}(t,q)+tq^{8N-1} G_{8N-7}(t,q).
\end{equation}
Replacing $N$ by $N-1$ in~\eqref{eq7} and substituting in~\eqref{cas2eq1}, we get
\begin{equation}
\label{cas2eq2}
\begin{aligned}
G_{8N-1}(t,q)&=G_{8N-3}(t,q)+tq^{8N-1} G_{8N-7}(t,q)
\\&+t^2q^{8N-2} G_{8N-11}(t,q) +t^3q^{16N-11} G_{8N-15}(t,q).
\end{aligned}
\end{equation}
Then replacing $t$ by $tq^2$ in~\eqref{cas2eq2}, we obtain the following equation:
\begin{equation}
\label{cas2main}
\begin{aligned}
G_{8N-1}(tq^2,q)&=G_{8N-3}(tq^2,q)+tq^{8N+1} G_{8N-7}(tq^2,q)
\\&+t^2q^{8N+2} G_{8N-11}(tq^2,q) +t^3q^{16N-5} G_{8N-15}(tq^2,q).
\end{aligned}
\end{equation}
Thus we want to prove that
\begin{equation*}
\begin{aligned}
G_{8N+2}(t,q)&=G_{8N}(t,q)+tq^{8N+1} G_{8N-4}(t,q)
\\&+t^2q^{8N+2} G_{8N-8}(t,q) +t^3q^{16N-5} G_{8N-12}(tq^2,q).
\end{aligned}
\end{equation*}
in order to be able to use the induction hypothesis.
We will need a few new equations to do so.

By definition, for all $n,k,N \in \N^*$,
\begin{equation}
\label{triangle2}
a_{8N+2}(k,n)=a_{8N}(k,n)+e_{8N+1}(k,n)+e_{8N+2}(k,n).
\end{equation}
We need formulas for $e_{8N+1}(k,n)$ and $e_{8N+2}(k,n).$

\begin{lemma}
\label{cas2}
For all $n,k,N \in \N^*$,
\begin{equation}
\label{dur1}
e_{8N+1}(k,n)=a_{8N-4}(k-1,n-(8N+1)),
\end{equation}
\begin{equation}
\label{dur2}
e_{8N+2}(k,n)=a_{8N-8}(k-2,n-(8N+2))+a_{8N-12}(k-3,n-(16N-5)),
\end{equation}
\end{lemma}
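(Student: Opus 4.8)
The plan is to follow exactly the bijective strategy used for Lemma~\ref{eqd}: remove the largest part of a partition counted by $e_{8N+1}(k,n)$ (resp.\ $e_{8N+2}(k,n)$) and read off, from the residue conditions \eqref{cond5}--\eqref{cond8}, what constraint this forces on the second largest part.

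First I would prove \eqref{dur1}. Take a partition $\lambda_1+\dots+\lambda_s$ counted by $e_{8N+1}(k,n)$, so $\lambda_1=8N+1\equiv 1\bmod 8$. Since $1$ belongs to every one of the residue sets $\{1,4\}$, $\{1,3,5,7\}$, $\{0,1,3,4,6,7\}$, $\{0,1,3,4,5,7\}$ occurring in \eqref{cond5}--\eqref{cond8}, none of these implications restricts $\lambda_1-\lambda_2$, so the sole condition inherited from $\lambda_1$ is $\lambda_1-\lambda_2\ge 5$, i.e.\ $\lambda_2\le 8N-4$. Deleting $\lambda_1$ (an odd part, so $k$ decreases by $1$ and $n$ by $8N+1$) produces a partition counted by $a_{8N-4}(k-1,n-(8N+1))$, and the map is reversible since adding a part equal to $8N+1$ in front of any partition with largest part $\le 8N-4$ creates a difference $\ge 5$ for which, $8N+1$ being $\equiv 1\bmod 8$, all of \eqref{cond5}--\eqref{cond8} hold automatically (and $8N+1\neq 2$). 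This yields \eqref{dur1}.

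Next I would prove \eqref{dur2}. Now $\lambda_1=8N+2\equiv 2\bmod 8$, and $2$ lies in \emph{none} of the four residue sets above, so \eqref{cond5}--\eqref{cond8} forbid $\lambda_1-\lambda_2\in\{5,6,7,8\}$; combined with $\lambda_1-\lambda_2\ge 5$ this forces $\lambda_1-\lambda_2\ge 9$, i.e.\ $\lambda_2\le 8N-7$. Delete $\lambda_1$ (an even part: $k$ drops by $2$, $n$ by $8N+2$) and split into the cases $\lambda_2\le 8N-8$ and $\lambda_2=8N-7$, obtaining respectively a partition counted by $a_{8N-8}(k-2,n-(8N+2))$ and one counted by $e_{8N-7}(k-2,n-(8N+2))$; both maps are reversible because adding $8N+2$ in front of a partition with largest part $\le 8N-7$ yields a difference $\ge 9$ (no mod~$8$ condition applies) and $8N+2\neq 2$. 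Finally, since $8N-7=8(N-1)+1$, I would invoke \eqref{dur1} with $N$ replaced by $N-1$ to rewrite $e_{8N-7}(k-2,n-(8N+2))=a_{8N-12}(k-3,n-(16N-5))$ (using $(8N+2)+(8N-7)=16N-5$), and adding the two contributions gives \eqref{dur2}.

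The calculation is routine; the only points needing attention are tracking the statistic $k$ correctly (a removed odd part costs $1$, a removed even part costs $2$), checking reversibility, which is immediate from the residues involved in each case, and, for \eqref{dur2}, noticing that the boundary case $\lambda_2=8N-7$ is again a largest part $\equiv 1\bmod 8$, so that \eqref{dur1} (shifted by one) can be reused rather than introducing a genuinely new quantity. Thus the only mild obstacle is the bookkeeping of the case split in \eqref{dur2}.
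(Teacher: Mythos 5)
Your proposal is correct and follows essentially the same route as the paper: remove the largest part, use the residue of $8N+1$ (resp.\ $8N+2$) modulo $8$ to determine the minimal gap, and for \eqref{dur2} split $a_{8N-7}$ as $a_{8N-8}+e_{8N-7}$ and peel off one more part. Your observation that the last step is just \eqref{dur1} with $N$ replaced by $N-1$ is a minor streamlining of the paper's explicit re-derivation, not a different argument.
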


\begin{proof}
\ 
\begin{itemize}
\item Proof of~\eqref{dur1}:

Let us consider a partition $\lambda_1+ \lambda_2 + ... + \lambda_s$ counted by $e_{8N+1}(k,n).$ By conditions~\eqref{cond5}-\eqref{cond8}, $\lambda_1 - \lambda_2 \geq 5$, therefore $\lambda_2 \leq 8N-4.$ Therefore if we remove the largest part, we obtain a partition counted by $a_{8N-4}(k-1,n-(8N+1)).$

\item Proof of~\eqref{dur2}:

Let us consider a partition $\lambda_1+ \lambda_2 + ... + \lambda_s$ counted by $e_{8N+2}(k,n).$ By conditions~\eqref{cond5}-\eqref{cond8}, $\lambda_1 - \lambda_2 \geq 9$, therefore $\lambda_2 \leq 8N-7.$ Therefore if we remove the largest part, we obtain a partition counted by $a_{8N-7}(k-2,n-(8N+2)).$
So $e_{8N+2}(k,n)= a_{8N-7}(k-2,n-(8N+2)),$ and by definition $$e_{8N+2}(k,n)= a_{8N-8}(k-2,n-(8N+2))+e_{8N-7}(k-2,n-(8N+2)).$$
Let us now consider a partition $\mu_1+ \mu_2 + ... + \mu_r$ counted by $e_{8N-7}(k-2,n-(8N+2)).$ By conditions~\eqref{cond5}-\eqref{cond8}, $\mu_1 - \mu_2 \geq 5$, therefore $\mu_2 \leq 8N-12$. If we remove the largest part $\mu_1=8N-7$, we obtain a partition counted by $a_{8N-12}(k-3,n-(8N+2)-(8N-7)).$
Thus $$e_{8N+2}(k,n)= a_{8N-8}(k-2,n-(8N+2))+a_{8N-12}(k-3,n-(16N-5)).$$
\end{itemize}
\end{proof}

Now by Lemma~\ref{cas2} and~\eqref{triangle2}, for all  $k,n,N \in \N*,$
\begin{equation*}
\begin{aligned}
a_{8N+2}(k,n)&=a_{8N}(k,n)+a_{8N-4}(k-1,n-(8N+1))
\\&+a_{8N-8}(k-2,n-(8N+2))+a_{8N-12}(k-3,n-(16N-5)).
\end{aligned}
\end{equation*}
This leads to the desired $q$-difference equation: 
\begin{equation*}
\begin{aligned}
G_{8N+2}(t,q)&=G_{8N}(t,q)+tq^{8N+1} G_{8N-4}(t,q)
\\&+t^2q^{8N+2} G_{8N-8}(t,q) +t^3q^{16N-5} G_{8N-12}(tq^2,q).
\end{aligned}
\end{equation*}
By the induction hypothesis, the result from the last subsection and~\eqref{cas2main}, we show
$$G_{8N+2}(t,q)=(1+tq)G_{8N-1}(t,q).$$

Let us now turn to the case $m \equiv 2 \mod 4.$

\subsection{Third case: $m \equiv 2 \mod 4$}
We suppose that $m=4N+2$ with $N \geq 2$ and prove that $$G_{8N+4}(t,q)=(1+tq)G_{8N+1}(tq^2,q).$$

Substituting~\eqref{eq1} into~\eqref{eq2}, we have
\begin{equation}
\label{plouf}
G_{8N+1}(t,q)=G_{8N-1}(t,q)+tq^{8N+1} G_{8N-4}(t,q)+t^{2}q^{8N} G_{8N-7}(t,q).
\end{equation}
Replacing $N$ by $N-1$ in~\eqref{eq5} and substituting in~\eqref{plouf}, we have
\begin{equation}
\label{plouf3}
\begin{aligned}
G_{8N+1}(t,q)&=G_{8N-1}(t,q)+tq^{8N+1} G_{8N-5}(t,q)+t^2q^{8N} G_{8N-7}(t,q)
\\&+t^3q^{16N-3} G_{8N-11}(t,q) + t^4q^{24N-12} G_{8N-15}(t,q).
\end{aligned}
\end{equation}
Then replacing $t$ by $tq^2$ in~\eqref{plouf3}, we obtain the following equation:
\begin{equation}
\label{etoile}
\begin{aligned}
G_{8N+1}(tq^2,q)&=G_{8N-1}(tq^2,q)+tq^{8N+3} G_{8N-5}(tq^2,q)+t^2q^{8N+4} G_{8N-7}(tq^2,q)
\\&+t^3q^{16N+3} G_{8N-11}(tq^2,q) + t^4q^{24N-4} G_{8N-15}(tq^2,q).
\end{aligned}
\end{equation}
Thus we want to prove that
\begin{equation*}
\begin{aligned}
G_{8N+4}(t,q)&=G_{8N+2}(t,q)+tq^{8N+3} G_{8N-2}(t,q)+t^2q^{8N+4} G_{8N-4}(t,q)
\\&+t^3q^{16N+3} G_{8N-8}(t,q) + t^4q^{24N-4} G_{8N-12}(t,q).
\end{aligned}
\end{equation*}
Again we need new equations to do so.

By definition, for all $n,k,N \in \N^*$,
\begin{equation}
\label{triangle}
a_{8N+4}(k,n)=a_{8N+2}(k,n)+e_{8N+3}(k,n)+e_{8N+4}(k,n).
\end{equation}
We need formulas for $e_{8N+3}(k,n)$ and $e_{8N+4}(k,n).$

\begin{lemma}
\label{casdifficile}
For all $n,k,N \in \N^*$,
\begin{equation}
\label{dur3}
e_{8N+3}(k,n)=a_{8N-2}(k-1,n-(8N+3))-e_{8N-3}(k-2,n-(8N+4)),
\end{equation}
\begin{equation}
\label{dur4}
\begin{aligned}
e_{8N+4}(k,n)&=a_{8N-4}(k-2,n-(8N+4))+e_{8N-3}(k-2,n-(8N+4))
\\&+a_{8N-8}(k-3,n-(16N+3))+a_{8N-12}(k-4,n-(24N-4)).
\end{aligned}
\end{equation}
\end{lemma}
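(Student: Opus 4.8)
The plan is to argue exactly as in the proof of Lemma~\ref{cas2}: peel off the largest part of a partition counted by $e_{8N+3}(k,n)$ or $e_{8N+4}(k,n)$, use conditions~\eqref{cond5}--\eqref{cond8} to read off the constraint on the second part, and iterate. The only genuinely new feature — and the reason these two formulas are less symmetric than \eqref{dur1}--\eqref{dur2} — is that at certain points I will deliberately rewrite an $a$ as $a\pm e$. The motivation is visible from \eqref{triangle} and \eqref{etoile}: to run the induction I must express $a_{8N+4}$ through generating functions with \emph{even} index (those are the ones to which the induction hypothesis \eqref{2N} applies), and then recognise the result as $(1+tq)$ times the right-hand side of \eqref{etoile}; the ``wrong'' pieces $\pm e_{8N-3}$ appearing in \eqref{dur3} and \eqref{dur4} are chosen precisely so that they cancel when the two formulas are added inside \eqref{triangle}.

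First I would record, for each largest part $M$ that occurs, which gaps $\lambda_1-\lambda_2$ conditions~\eqref{cond5}--\eqref{cond8} permit when $\lambda_1=M$. Reducing those conditions modulo $8$: for $M=8N+3\equiv 3$ the gap $5$ is forbidden, so $\lambda_2\le 8N-3$; for $M=8N+4\equiv 4$ the gap $6$ is forbidden while $5,7,8,\dots$ are allowed, so $\lambda_2=8N-1$ or $\lambda_2\le 8N-3$; for $M=8N-1\equiv 7$ only the gap $5$ is forbidden, so $\lambda_2\le 8N-7$; for $M=8N-2\equiv 6$ the gaps $5,6,8$ are forbidden but $7$ is allowed, so $\lambda_2\le 8N-9$ with $\lambda_2\neq 8N-10$; and for $M=8N-7\equiv 1$ every gap $\ge 5$ is allowed, so $\lambda_2\le 8N-12$. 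Each such ``remove the largest part'' operation is a weight-preserving bijection onto the corresponding $a$-class (removing an odd part lowers $k$ by $1$, an even part by $2$); in particular this gives $e_{8N-1}(j,m)=a_{8N-7}(j-1,m-(8N-1))$ and $e_{8N-7}(j,m)=a_{8N-12}(j-1,m-(8N-7))$.

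For \eqref{dur3}: removing the part $8N+3$ gives $e_{8N+3}(k,n)=a_{8N-3}(k-1,n-(8N+3))$; I then write $a_{8N-3}=a_{8N-2}-e_{8N-2}$ and prove $e_{8N-2}(k-1,n-(8N+3))=e_{8N-3}(k-2,n-(8N+4))$. The latter I would establish by the bijection that replaces the largest part $8N-2$ of such a partition by $8N-3$: by the gap analysis above the admissible set of second parts is $\{\mu\le 8N-9:\ \mu\neq 8N-10\}$ in both cases, the rest of the partition is untouched, and the new top gap ($6$, $8$, or $\ge 9$) is admissible below a part $\equiv 5\bmod 8$; the map lowers $n$ by $1$ and $k$ by $1$, giving exactly the asserted shift of arguments. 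For \eqref{dur4}: removing $8N+4$ splits the count according to $\lambda_2$. The case $\lambda_2\le 8N-3$ contributes $a_{8N-3}(k-2,n-(8N+4))=a_{8N-4}(k-2,n-(8N+4))+e_{8N-3}(k-2,n-(8N+4))$, and the case $\lambda_2=8N-1$ contributes $e_{8N-1}(k-2,n-(8N+4))$, which by the bijections of the previous paragraph equals $a_{8N-7}(k-3,n-(16N+3))=a_{8N-8}(k-3,n-(16N+3))+e_{8N-7}(k-3,n-(16N+3))=a_{8N-8}(k-3,n-(16N+3))+a_{8N-12}(k-4,n-(24N-4))$. Adding the two contributions yields \eqref{dur4}.

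The routine parts are the mod-$8$ reductions of \eqref{cond5}--\eqref{cond8} for the five values of $M$ and the bookkeeping of the shifts in $(k,n)$. I expect the one real point to watch to be the identity $e_{8N-2}(k-1,n-(8N+3))=e_{8N-3}(k-2,n-(8N+4))$ underlying the ``$-e_{8N-3}$'' term in \eqref{dur3}: its content is the observation that a part $\equiv 6\bmod 8$ and a part $\equiv 5\bmod 8$ of these particular sizes force the \emph{same} set of admissible next parts, which is exactly what makes $8N-2\mapsto 8N-3$ a bijection. Everything else is arranged so that Lemma~\ref{casdifficile}, substituted into \eqref{triangle} and combined with the induction hypothesis and \eqref{etoile}, produces $G_{8N+4}(t,q)=(1+tq)G_{8N+1}(tq^2,q)$.
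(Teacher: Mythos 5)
Your proposal is correct and follows essentially the same route as the paper: remove the largest part, use the mod-$8$ reformulation of the gap conditions to constrain the second part, and split $a$-counts as $a\pm e$ so that the spurious $e_{8N-3}$ terms cancel when \eqref{dur3} and \eqref{dur4} are added in \eqref{triangle}. The only cosmetic difference is that for the key identity $e_{8N-2}(k-1,n-(8N+3))=e_{8N-3}(k-2,n-(8N+4))$ you give a direct bijection replacing the part $8N-2$ by $8N-3$, whereas the paper shows both sides equal $e_{8N-9}(k-3,n-(16N+1))+a_{8N-11}(k-3,n-(16N+1))$ by peeling off the largest part from each; both arguments rest on the same observation that the two admissible sets of second parts coincide.
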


\begin{proof}
\ 
\begin{itemize}

\item Proof of~\eqref{dur3}:

In the same way as before, by conditions~\eqref{cond5}-\eqref{cond8}, $$e_{N+3}(k,n)=a_{8N-3}(k-1,n-(8N+3)).$$
Thus by definition $$e_{8N+3}(k,n)=a_{8N-2}(k-1,n-(8N+3))-e_{8N-2}(k-1,n-(8N+3)).$$

Now let us consider a partition $\lambda_1+ \lambda_2 + ... + \lambda_s$ counted by $e_{8N-2}(k-1,n-(8N+3))$.
By Conditions~\eqref{cond5}-\eqref{cond8}, $\lambda_1 - \lambda_2 = 7$ or $\lambda_1 - \lambda_2 \geq 9$, therefore $\lambda_2=8N-9$ or $\lambda_2 \leq 8N-11$. Let us remove the largest part $\lambda_1=8N-2$. If $\lambda_2=8N-9$, we obtain a partition counted by $e_{8N-9}(k-3,n-(16N+1))$. If $\lambda_2 \leq 8N-11$, we obtain a partition counted by $a_{8N-11}(k-3,n-(16N+1)).$ Thus the following holds:
\begin{equation*}
\begin{aligned}
e_{8N-2}(k-1,n-(8N+3))&= e_{8N-9}(k-3,n-(16N+1)) 
\\&+ a_{8N-11}(k-3,n-(16N+1)).
\end{aligned}
\end{equation*}

In the exact same way we can show that
\begin{equation*}
\begin{aligned}
e_{8N-3}(k-2,n-(8N+4))&= e_{8N-9}(k-3,n-(16N+1))
\\&+ a_{8N-11}(k-3,n-(16N+1)).
\end{aligned}
\end{equation*}

Therefore $$e_{8N-2}(k-1,n-(8N+3))=e_{8N-3}(k-2,n-(8N+4)),$$ and \eqref{dur3} is proved.

\item Proof of~\eqref{dur4}:

Now let us consider a partition $\lambda_1+ \lambda_2 + ... + \lambda_s$ counted by $e_{8N+4}(k,n)$.
By conditions~\eqref{cond5}-\eqref{cond8}, $\lambda_1 - \lambda_2 = 5$ or $\lambda_1 - \lambda_2 \geq 7$. Therefore by removing the largest part, we obtain
\begin{equation*}
\begin{aligned}
e_{8N+4}(k,n)&= a_{8N-4}(k-2,n-(8N+4))
\\&+e_{8N-3}(k-2,n-(8N+4))+e_{8N-1}(k-2,n-(8N+4)).
\end{aligned}
\end{equation*}

By similar reasoning,
\begin{equation*}
\begin{aligned}
&e_{8N-1}(k-2,n-(8N+4))
\\&= a_{8N-8}(k-3,n-(16+3))+e_{8N-7}(k-3,n-(16N+3))
\\&= a_{8N-8}(k-3,n-(16+3))+a_{8N-12}(k-4,n-(24N-4)).
\end{aligned}
\end{equation*}

Equation~\eqref{dur4} is proved.

\end{itemize}
\end{proof}

Now by Lemma~\ref{casdifficile} and~\eqref{triangle}, for all  $k,n,N \in \N*,$
\begin{equation*}
\begin{aligned}
a_{8N+4}(k,n)&=a_{8N}(k,n)+a_{8N-2}(k-1,n-(8N+3))+a_{8N-4}(k-2,n-(8N+4))
\\&+a_{8N-8}(k-3,n-(16N+3))+a_{8N-12}(k-4,n-(24N-4)).
\end{aligned}
\end{equation*}
This leads to the desired $q$-difference equation: 
\begin{equation*}
\begin{aligned}
G_{8N+4}(t,q)&=G_{8N+2}(t,q)+tq^{8N+3} G_{8N-2}(t,q)+t^2q^{8N+4} G_{8N-4}(t,q)
\\&+t^3q^{16N+3} G_{8N-8}(t,q) + t^4q^{24N-4} G_{8N-12}(t,q).
\end{aligned}
\end{equation*}
By the induction hypothesis and~\eqref{etoile}, we show
$$G_{8N+4}(t,q)=(1+tq)G_{8N+1}(t,q).$$

We can now treat the last case.

\subsection{Fourth case: $m \equiv 3 \mod 4$}

Finally, we suppose that $m=4N+3$ with $N \geq 2$ and prove that $$G_{8N+6}(t,q)=\left(1+tq\right)G_{8N+3}(tq^2,q).$$

Replacing $t$ by $tq^2$ in~\eqref{eq3} and~\eqref{eq4} leads to
\begin{equation}
\label{eq3'}
G_{8N+2}(tq^2,q)=G_{8N+1}(tq^2,q)+t^{2}q^{8N+6} G_{8N-7}(tq^2,q),
\end{equation}
\begin{equation}
\label{eq4'}
G_{8N+3}(tq^2,q)=G_{8N+2}(tq^2,q)+tq^{8N+5} G_{8N-3}(tq^2,q).
\end{equation}
Adding~\eqref{eq3'} and~\eqref{eq4'} we obtain:
\begin{equation}
\label{eq34}
G_{8N+3}(tq^2,q)=G_{8N+1}(tq^2,q)+tq^{8N+5} G_{8N-3}(tq^2,q)+t^{2}q^{8N+6}G_{8N-7}(tq^2,q).
\end{equation}
We now want to show that
\begin{equation*}
G_{8N+6}(t,q)=G_{8N+4}(t,q)+tq^{8N+5} G_{8N}(t,q)+t^{2}q^{8N+6}G_{8N-4}(t,q).
\end{equation*}
By definition we have 
\begin{equation}
\label{pif3}
a_{8N+6}(k,n)=a_{8N+4}(k,n)+e_{8N+5}(k,n)+e_{8N+6}(k,n).
\end{equation}
In a similar manner as above, by conditions~\eqref{cond5}-\eqref{cond8} and removing the largest part, we show that 
\begin{equation}
\label{pif1}
\begin{aligned}
e_{8N+5}(k,n)&=a_{8N}(k-1,n-(8N+5))
\\&-e_{8N}(k-1,n-(8N+5))-e_{8N-2}(k-1,n-(8N+5)),
\end{aligned}
\end{equation}
and 
\begin{equation}
\label{pif2}
\begin{aligned}
e_{8N+6}(k,n)&=a_{8N-4}(k-2,n-(8N+6))
\\&+e_{8N-1}(k-2,n-(8N+6))+e_{8N-3}(k-2,n-(8N+6)).
\end{aligned}
\end{equation}
Yet again by the same method we show that
\begin{equation*}
e_{8N-1}(k-2,n-(8N+6))=a_{8N-7}(k-3,n-(16N+5)),
\end{equation*}
and
\begin{equation*}
e_{8N}(k-1,n-(8N+5))=a_{8N-7}(k-3,n-(16N+5)).
\end{equation*}
Therefore
\begin{equation*}
e_{8N}(k-1,n-(8N+5))=e_{8N-1}(k-2,n-(8N+6)).
\end{equation*}
And in the same way
\begin{equation*}
e_{8N-3}(k-2,n-(8N+6))=e_{8N-9}(k-3,n-(16N+3))+a_{8N-11}(k-3,n-(16N+3)),
\end{equation*}
and
\begin{equation*}
e_{8N-2}(k-1,n-(8N+5))=e_{8N-9}(k-3,n-(16N+3))+a_{8N-11}(k-3,n-(16N+3)).
\end{equation*}
Therefore
\begin{equation*}
e_{8N-2}(k-1,n-(8N+5))=e_{8N-3}(k-2,n-(8N+6)).
\end{equation*}
So by summing~\eqref{pif1} and~\eqref{pif2} and replacing in~\eqref{pif3}, we get $$a_{8N+6}(k,n)=a_{8N+4}(k,n)+a_{8N}(k-1,n-(8N+5))+a_{8N-4}(k-2,n-(8N+6)),$$
which gives in terms of generating functions
\begin{equation*}
G_{8N+6}(t,q)=G_{8N+4}(t,q)+tq^{8N+5} G_{8N}(t,q)+t^{2}q^{8N+6}G_{8N-4}(t,q).
\end{equation*}
By~\eqref{eq34}, the results from the last two subsections and the induction hypothesis,
$$G_{8N+6}(t,q)=\left(1+tq\right)G_{8N+3}(tq^2,q).$$
This concludes the proof of Theorem~\ref{main}.

\subsection{Final argument}

By Theorem~\ref{main}, we have for all $N \in \N^*,$ $$G_{2N}(t,q)=(1+tq)G_{2N-3}(tq^2,q).$$

So, if we let $N \rightarrow \infty$, we obtain:

\begin{equation}
\label{iteration}
G_{\infty}(t,q)=\left(1+tq\right)G_{\infty}(tq^2,q).
\end{equation}

Iteration of \eqref{iteration} shows that:
\begin{equation*}
G_{\infty}(t,q)= \prod_{k=0}^{\infty} \left(1+ tq^{2k+1}\right).
\end{equation*}

This completes the proof of Theorem~\ref{refinement}.

\section{Conclusion}
We have proved combinatorially and refined Theorem~\ref{siladic}. It would be interesting to see if other partition identities arising from the theory of vertex operators or Lie algebras can be proved using similar methods. Papers by Siladi\'c~\cite{Siladic}, Primc~\cite{Primc} and Meurman-Primc~\cite{Meurman} contain examples of such identities.

Furthermore in~\cite{Alladi}, Alladi, Andrews and Gordon give a bijective proof and a refinement of Capparelli's conjecture, which also comes from the study of Lie algebras. One might investigate if a bijective proof would be possible for Theorem~\ref{siladic} too.

Finally, it would be a question of interest to determine if the variable $k$ of our refinement can also be interpreted algebraically.

\section*{Acknowledgements}
The author would like to thank Jeremy Lovejoy and Fr\'ed\'eric Jouhet for introducing her to this subject and sharing their ideas with her, and Jeremy Lovejoy for carefully reading the preliminary versions of this paper and giving her helpful suggestions to improve it.

%\bibliographystyle{siam}
%\bibliography{references}

\begin{thebibliography}{10}


\bibitem{Alladi}
{\sc K.~Alladi, G.~E. Andrews, and B.~Gordon}, {\em Refinements and
  generalizations of {C}apparelli’s conjecture on partitions}, Journal of
  Algebra, 174 (1995), pp.~636--658.

\bibitem{Andrews3}
{\sc G.~E. Andrews}, {\em On a theorem of {S}chur and {G}leissberg}, Archiv der
  Mathematik, 22 (1971), pp.~165--167.

\bibitem{Andrews1}
\leavevmode\vrule height 2pt depth -1.6pt width 23pt, {\em {S}chur’s theorem,
  {C}apparelli’s conjecture and q-trinomial coefficients}, Contemp. Math.,
  166 (1992), pp.~141--154.

\bibitem{Capparelli}
{\sc S.~Capparelli}, {\em Vertex Operator Relations for Affine Algebras and
  Combinatorial Identities}, PhD thesis, Rutgers University, 1988.

\bibitem{Dousse}
{\sc J.~Dousse}, {\em On generalizations of partition theorems of {S}chur and
  {A}ndrews to overpartitions}.
\newblock submitted.

\bibitem{Lepowsky}
{\sc J.~Lepowsky and R.~Wilson}, {\em The structure of standard modules, {I}:
  Universal algebras and the {R}ogers-{R}amanujan identities}, Inventiones
  mathematicae, 77 (1984), pp.~199--290.

\bibitem{Meurman}
{\sc A.~Meurman and M.~Primc}, {\em Annihilating ideals of standard modules of
  {${\mathfrak{sl}}(2,\C)\sptilde$} and combinatorial identities}, Mem. Amer. Math. Soc.,
  (1999).

\bibitem{Primc}
{\sc M.~Primc}, {\em Some crystal {R}ogers-{R}amanujan type identities}, III,
  (1999), pp.~73--86.

\bibitem{RogersRamanujan}
{\sc L.~J. Rogers and S.~Ramanujan}, {\em Proof of certain identities in
  combinatory analysis}, Cambr. Phil. Soc. Proc., 19 (1919), pp.~211--216.
  
\bibitem{Siladic}
{\sc I.~Siladi\'c}, {\em Twisted ${\mathfrak{sl}}(3,\C)\sptilde$-modules and combinatorial identities}, arXiv:math/0204042v2.

\end{thebibliography}

\end{document}